\newtheorem{theorem}{Theorem}[section]
\theoremstyle{definition}
\theoremstyle{remark}
\newcommand{\be}{\begin{equation}}
\newcommand{\ee}{\end{equation}}
\begin{document}

\title[]{Some results on Continuous dependence of fractal functions on the Sierpi\'nski gasket}



\author{*Vishal Agrawal}
\address{Department of Mathematical Sciences, IIT(BHU), Varanasi, India 221005 }
\email{*vishal.agrawal1992@gmail.com}

\author{Ajay Prajapati}
\address{Department of Computer science, Banaras Hindu University, Varanasi, India 221005}

\email{ajaypraja640@gmail.com}

\author{Abhilash Sahu}
\address{Department of Mathematics
Indian Institute of Technology Guwahati,
Guwahati, Assam, 
India, 781039}

\email{sahu.abhilash16@gmail.com}

\author{Tanmoy Som}
\address{Department of Mathematical Sciences, IIT(BHU), Varanasi, India 221005\\}
\email{tsom.apm@iitbhu.ac.in}



\subjclass[2010]{Primary 28A80; Secondary 41A10}


 
\keywords{fractal dimension, fractal interpolation, Sierpi\'nski Gasket, continuous dependence}

\begin{abstract}
In this article, we show that $\alpha$-fractal functions defined on Sierpi\'nski gasket (denoted by $\triangle$) depend continuously on the parameters involved in the construction. In the latter part of this article, the continuous dependence of parameters on $\alpha$-fractal functions defined on $\triangle$ is shown graphically. 
\end{abstract}

\maketitle



\section{Introduction}\label{section 1} 
In the field of Numerical Analysis, the computational implications of interpolation have been a major concern for many years. The theory of interpolation has evolved throughout the development of classical approximation theory. Various interpolation techniques are used in Numerical Analysis and Classical Approximation Theory, and they are based on polynomial, trigonometric, spline, and rational functions. Based on the underlying idea of the model under investigation, these techniques can be applied to a specific data set. While nonrecursive interpolation techniques in the literature almost always produce smooth interpolants, it should be emphasized that they are not all recursive. There are a great number of real-world phenomena and experimental signals that are confusing, and sometimes their traces appear smooth. Due to their non-differentiability and complexity, a simple mathematical framework may not adequately describe their smallest geometric complexity. In order to produce an interpolant with a more complex geometric structure, it is necessary to develop a novel interpolation approach.  Univariate real-valued interpolation functions constructed on a compact interval in $\mathbb{R}$ were first proposed by Barnsley \cite{MF1}. These are referred to as Fractal Interpolation Functions (FIFs for short), and their construction is based on the Iterated Function System (IFS) theory \cite{Hut}. In \cite{MF1}, the groundbreaking research on fractal interpolation has attracted a lot of interest in the literature and is currently going strong. The author of \cite{M2} demonstrated that the FIFs theory can be used to construct a family of continuous functions with fractal properties from a given continuous function.

 A number of studies have exposed several significant characteristics of FIFs, including their smoothness, stability, one-sided approximation property and constraint approximation property, as well as their box dimension and Hausdorff dimension. There are several research articles available on various types of FIFs, see, for instance, \cite{SS2, SS51, JVC1,VC2, Mnew2, coal}. 
Numerous studies reveal several significant aspects of FIFs, such as their smoothness, stability, one-sided approximation property and constrained approximation properties, as well as box dimension and Hausdorff dimension of their graphs.
Celik et al. \cite{Celik} expanded the notion of FIFs to incorporate interpolation of a data set on $\triangle$. Following this article, Ruan \cite{Ruan3} developed FIFs on post critically finite self-similar sets. Kigami \cite{Kig} is credited with having introduced and analysed these sets. On $\triangle$, Ri and Ruan \cite{Ruan4} defined several fundamental characteristics of a space of FIFs. The fractal dimensions of FIFs defined on various domains have been thoroughly investigated in numerous articles, see, for instance, \cite{Chana, SS5, Fal, Verma21, Pnd, Pr1, SP, Manuj, VPV2  ,VPV1, VM1}. Recently, Mohapatra et al. \cite{MNSV} introduced a concept that generalised the notions of the Kannan map and contraction. In \cite{PV11}, Prasad and Verma  have constructed the FIFs on the product of two Sierpi\'nski gaskets. \par
We denote the space of all the real-valued continuous functions defined on $\triangle$ by  $C(\triangle, \mathbb{R})$ and graph of $f$ by $graph(f)$ throughout this paper. 
   
 \section{Fractal interpolation function on the Sierpi\'nski gasket}
 We begin by providing a brief overview of the relevant concepts and an introduction to $\triangle$. The reader may refer to \cite{LP, Ri2, stri, vermabv} for further information. We begin by recalling an established $\triangle$ construction based on IFS.     
Consider a set $V_0=\left\{x_1, x_2, x_3\right\}$ such that points in $V_0$ have equal distances from each other. Corresponding to each point of $V_{0}$, define the contraction map $u_{i}$ on $\mathbb{R}^2$ as follows:

$$u_i(t)=\frac{1}{2}\left(t+x_i\right),$$ 
where $i=1,2,3$. Then, three contraction maps together with the plane  constitute an IFS, which produces $\triangle$ as an attractor, i.e., 
$$
\triangle= \bigcup_{i=1}^{3}  u_i(\triangle). 
$$
Define $V_1$ by $V_1=\left\{x_1, x_2, x_3, u_1\left(x_2\right), u_2\left(x_3\right), u_3\left(x_1\right)\right\}$. Let us consider a continuous function $f: \triangle \rightarrow \mathbb{R}$.  Let $U=\triangle \times \mathbb{R}$ and define maps $H_i: U \rightarrow U$ by
$$
H_i(t, x)=\left(u_i(t), M_i(t, x)\right), i=1,2,3,
$$ 
 
where $M_i(t, x): \triangle \times \mathbb{R} \rightarrow \mathbb{R}$ is a contraction map in the last variable, that is,
$$
\left|M_i(., x)-M_i\left(., x^{\prime}\right)\right| \leq c\left|x-x^{\prime}\right|
$$
with  $M_i\left(p_j, f\left(p_j\right)\right)=f\left(u_i\left(p_j\right)\right)$. In particular, we take
$$
M_i(t, x)=\alpha_i x+f\left(u_i(t)\right)-\alpha_i b(t),
$$
where $b \in C(\triangle, \mathbb{R})$ is base function and $f \in C(\triangle, \mathbb{R})$ original function  such that $b\left(p\right)=f\left(p\right)~ \forall~ p\in V_0$, and scale vector $\alpha_i \in \mathbb{R}$ with $\left|\alpha_i\right|_{\infty}<1$. Thus, We have an IFS $\mathcal{J}:= \left\{U, H_i: i=1,2,3\right\}$.\par

\begin{theorem}\cite{EPJS}
Let $f\in C(\triangle, \mathbb{R})$, and $b \in C(\triangle, \mathbb{R})$. Then, above defined IFS  $\mathcal{J}$ has a unique attractor  $graph(f^\alpha)$. The set $graph(f^\alpha)$ is the graph of a continuous function $f^\alpha: \triangle \rightarrow \mathbb{R}$, which satisfies $\left.f^\alpha\right|_{V_1}=\left.f\right|_{V_1}$. Furthermore, we have the following functional equation
\begin{equation}
  f^\alpha(t)=f(t)+\alpha_i\left(f^\alpha-b\right)\left(u_i^{-1}(t)\right)~ \forall ~t \in u_i(\triangle), i \in\{1,2,3\}.  
\end{equation}
\end{theorem}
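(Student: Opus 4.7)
The plan is to realize $f^\alpha$ as the unique fixed point of a Read--Bajraktarevi\'c-type operator on a carefully chosen complete metric space, and then show that its graph is exactly the attractor of $\mathcal{J}$.

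First I would introduce the space
\[
\mathcal{C}_f = \{\, g \in C(\triangle, \mathbb{R}) : g|_{V_0} = f|_{V_0}\,\},
\]
which is a closed subset of $C(\triangle, \mathbb{R})$ endowed with the uniform metric, hence complete. On it I would define the operator $T: \mathcal{C}_f \to \mathcal{C}_f$ piecewise by
\[
(Tg)(t) = f(t) + \alpha_i\bigl(g - b\bigr)\bigl(u_i^{-1}(t)\bigr), \qquad t \in u_i(\triangle),\ i=1,2,3.
\]
The main technical obstacle, and the only subtle point, is to check that this piecewise definition is unambiguous and yields a continuous function. The cells $u_i(\triangle)$ and $u_j(\triangle)$ for $i\neq j$ intersect only at the single junction point $u_i(x_j) = u_j(x_i)$, and at such a point the two candidate values of $(Tg)(t)$ are $f(t) + \alpha_i(g(x_j)-b(x_j))$ and $f(t) + \alpha_j(g(x_i)-b(x_i))$. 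Both correction terms vanish because $g|_{V_0}=f|_{V_0}$ and $b|_{V_0}=f|_{V_0}$, so the definition is consistent and continuity across junctions follows from continuity of $f$, $g$, $b$ and of the branch inverses on each cell. A direct computation using $u_k(x_k)=x_k$ and the vanishing just observed then gives $(Tg)|_{V_1} = f|_{V_1}$, in particular $Tg \in \mathcal{C}_f$.

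Next I would verify the contractivity: for any $g, h \in \mathcal{C}_f$ and any $t \in u_i(\triangle)$,
\[
\bigl|(Tg)(t) - (Th)(t)\bigr| = |\alpha_i|\,\bigl|g(u_i^{-1}(t)) - h(u_i^{-1}(t))\bigr| \le |\alpha|_\infty \, \|g-h\|_\infty,
\]
so taking a supremum in $t$ gives $\|Tg - Th\|_\infty \le |\alpha|_\infty \|g-h\|_\infty$ with $|\alpha|_\infty<1$. By the Banach fixed-point theorem, $T$ has a unique fixed point $f^\alpha \in \mathcal{C}_f$, which satisfies the stated functional equation and the interpolation condition $f^\alpha|_{V_1} = f|_{V_1}$.

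Finally, I would identify $\mathrm{graph}(f^\alpha)$ with the attractor of $\mathcal{J}$. The fixed point equation, rewritten cellwise, says precisely that for each $i$ and each $s\in\triangle$,
\[
H_i\bigl(s, f^\alpha(s)\bigr) = \bigl(u_i(s),\, f^\alpha(u_i(s))\bigr),
\]
so $H_i(\mathrm{graph}(f^\alpha)) = \mathrm{graph}(f^\alpha|_{u_i(\triangle)})$, and summing over $i$ yields $\bigcup_{i=1}^{3} H_i(\mathrm{graph}(f^\alpha)) = \mathrm{graph}(f^\alpha)$. Since $\mathrm{graph}(f^\alpha)$ is a nonempty compact subset of $U$ invariant under the Hutchinson operator of $\mathcal{J}$, and that operator is a contraction on the hyperspace of compact sets (because each $H_i$ is a contraction in a suitable metric on $U$), uniqueness of the attractor forces it to coincide with $\mathrm{graph}(f^\alpha)$, completing the proof.
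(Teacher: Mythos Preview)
The paper does not supply its own proof of this theorem; it is quoted from \cite{EPJS} and stated without argument. Your Read--Bajraktarevi\'c approach is exactly the standard one used in that reference and in the literature on $\alpha$-fractal functions, and the verification of well-definedness at the junction points $u_i(x_j)=u_j(x_i)$, the interpolation condition on $V_1$, and the contractivity of $T$ are all correct.

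One point deserves more care. In the final paragraph you assert that the Hutchinson operator of $\mathcal{J}$ is a contraction on the hyperspace of compact subsets of $U$ ``because each $H_i$ is a contraction in a suitable metric on $U$''. With merely continuous $f$ and $b$ the maps $H_i(t,x)=(u_i(t),\,\alpha_i x+f(u_i(t))-\alpha_i b(t))$ need not be contractions in any metric equivalent to the product metric: the $t$-dependence of $M_i$ is only continuous, not Lipschitz. The clean way to finish is instead to argue uniqueness \emph{among graphs}: any compact invariant set that is the graph of some $g\in\mathcal{C}_f$ forces $g$ to satisfy the same functional equation, hence $g=f^\alpha$ by uniqueness of the fixed point of $T$. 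That is the sense in which $\mathrm{graph}(f^\alpha)$ is the attractor of $\mathcal{J}$ in this setting, and it is how the cited result is actually proved.
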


\section{Continuous dependence on parameter $\alpha$ AND $b$}
Let $f \in C(\triangle, \mathbb{R})$. Define a map $W$ from $S$ to $C(\triangle, \mathbb{R})$ by
$$
W(\alpha)=f_{b}^\alpha,
$$
where $S=\left\{\alpha \in \mathbb{R}^3:|\alpha|_{\infty} \leq r<1\right.$ and $r$ is a fixed number$\}$ and $f_b^\alpha$ is $\alpha$-fractal function associated with $f$ with respect to $b$ and the scale vector $\alpha$.

\begin{theorem}
 For fixed $f \in C(\triangle, \mathbb{R})$ and for suitable $b \in C(\triangle, \mathbb{R})$, the map $W: S \rightarrow C(\triangle, \mathbb{R})$ is continuous.
\end{theorem}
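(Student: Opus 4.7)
The plan is to exploit the functional equation satisfied by $f_b^\alpha$ to estimate $\|f_b^\alpha-f_b^{\alpha'}\|_\infty$ directly in terms of $|\alpha-\alpha'|_\infty$, and in fact to prove that $W$ is Lipschitz continuous on $S$.

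First, I would fix $\alpha,\alpha'\in S$ and, for each $i\in\{1,2,3\}$ and $t\in u_i(\triangle)$, subtract the two functional equations to obtain
\begin{equation*}
 f_b^\alpha(t)-f_b^{\alpha'}(t)=\alpha_i\bigl(f_b^\alpha-b\bigr)\!\bigl(u_i^{-1}(t)\bigr)-\alpha_i'\bigl(f_b^{\alpha'}-b\bigr)\!\bigl(u_i^{-1}(t)\bigr).
\end{equation*}
The standard trick is to add and subtract $\alpha_i(f_b^{\alpha'}-b)(u_i^{-1}(t))$, which splits the right-hand side into
\begin{equation*}
 \alpha_i\bigl(f_b^\alpha-f_b^{\alpha'}\bigr)\!\bigl(u_i^{-1}(t)\bigr)+(\alpha_i-\alpha_i')\bigl(f_b^{\alpha'}-b\bigr)\!\bigl(u_i^{-1}(t)\bigr).
\end{equation*}
Since $\triangle=\bigcup_{i=1}^3 u_i(\triangle)$ and $|\alpha|_\infty\le r<1$, taking the supremum over $t\in\triangle$ yields
\begin{equation*}
 \|f_b^\alpha-f_b^{\alpha'}\|_\infty\le r\,\|f_b^\alpha-f_b^{\alpha'}\|_\infty+|\alpha-\alpha'|_\infty\,\|f_b^{\alpha'}-b\|_\infty,
\end{equation*}
whence $\|f_b^\alpha-f_b^{\alpha'}\|_\infty\le (1-r)^{-1}|\alpha-\alpha'|_\infty\,\|f_b^{\alpha'}-b\|_\infty$.

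Second, to turn this into genuine continuity I need a bound on $\|f_b^{\alpha'}-b\|_\infty$ that is uniform in $\alpha'\in S$. Applying the functional equation once more gives $|f_b^{\alpha'}(t)-f(t)|\le r\,\|f_b^{\alpha'}-b\|_\infty$ for every $t$, so by the triangle inequality
\begin{equation*}
 \|f_b^{\alpha'}-b\|_\infty\le\|f_b^{\alpha'}-f\|_\infty+\|f-b\|_\infty\le r\,\|f_b^{\alpha'}-b\|_\infty+\|f-b\|_\infty,
\end{equation*}
and therefore $\|f_b^{\alpha'}-b\|_\infty\le (1-r)^{-1}\|f-b\|_\infty$, a bound that does not depend on $\alpha'$. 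Combining the two estimates gives
\begin{equation*}
 \|W(\alpha)-W(\alpha')\|_\infty\le\frac{\|f-b\|_\infty}{(1-r)^2}\,|\alpha-\alpha'|_\infty,
\end{equation*}
which proves Lipschitz continuity of $W$ and hence continuity.

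The mild obstacle, and the reason the statement restricts $b$ to be \emph{suitable}, is really just the bookkeeping of the sup norm across the three pieces $u_i(\triangle)$ together with the requirement that $\|f-b\|_\infty$ be finite (which is automatic here since $f,b\in C(\triangle,\mathbb{R})$ and $\triangle$ is compact); no contraction-mapping/fixed-point argument beyond the one already used to define $f_b^\alpha$ is needed, because the functional equation itself encodes enough self-similarity to close the estimate. The absorbed factor $r<1$ is what prevents the recursion from blowing up and is the only place the constraint $|\alpha|_\infty\le r<1$ is used.
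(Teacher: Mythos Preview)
Your proof is correct and follows the same overall strategy as the paper: subtract the two functional equations on each cell $u_i(\triangle)$, perform an add-and-subtract manipulation, and absorb the self-referential term using $|\alpha|_\infty\le r<1$. The difference lies in how you group terms. The paper adds and subtracts $\beta_i f_b^\alpha$, obtaining the bound
\[
\|f_b^\alpha-f_b^\beta\|_\infty\le\frac{|\alpha-\beta|_\infty}{1-r}\bigl(\|f_b^\alpha\|_\infty+\|b\|_\infty\bigr),
\]
which depends on the fixed point $\alpha$ and therefore yields only continuity at each $\alpha$. You instead add and subtract $\alpha_i(f_b^{\alpha'}-b)(u_i^{-1}(t))$, keeping the factor $f_b^{\alpha'}-b$ intact, and then supply the extra step of bounding $\|f_b^{\alpha'}-b\|_\infty\le(1-r)^{-1}\|f-b\|_\infty$ uniformly in $\alpha'$. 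This gives the sharper conclusion that $W$ is globally Lipschitz with constant $(1-r)^{-2}\|f-b\|_\infty$, a strictly stronger statement than what the paper proves. One small remark: the qualifier ``suitable'' on $b$ refers to the matching condition $b|_{V_0}=f|_{V_0}$ required for the construction of $f_b^\alpha$, not to finiteness of $\|f-b\|_\infty$.
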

\begin{proof}
The fixed point theory says that for a fixed a scale vector $\alpha$, and $b$, the map $f_b^\alpha$ is unique. Further, being fixed point of RB- operator, $f_b^\alpha$ satisfies the functional equation:
$$
f_b^\alpha(t)=f(t)+\alpha_i\left(f_b^\alpha-b\right) \circ u_i^{-1}(t), ~\forall~ t \in u_i(\triangle), i \in\{1,2,3\} .
$$
It is obvious that $W$ is well defined. Let $\alpha \in S$, then from the above functional equation, we have
$$
W(\alpha)(t)=f_b^\alpha(t)=f(t)+\alpha_i\left(f_b^\alpha-b\right) \circ u_i^{-1}(t), ~\forall ~t \in u_i(\triangle), i \in\{1,2,3\}
$$
and for $\beta \in S$,
$$
W(\beta)(t)=f_b^\beta(t)=f(t)+\beta_i\left(f_b^\beta-b\right) \circ u_i^{-1}(t), ~\forall~t \in u_i(\triangle), i \in\{1,2,3\}.
$$
We shall show that $W$ is continuous at $\alpha$. For this, subtract one from other of the above two equations, for $t \in u_i(\triangle)$, we have
\begin{equation}
    \begin{aligned}
f_b^\alpha(t)-f_b^\beta(t) &=\left[f(t)+\alpha_i\left(f_b^\alpha-b\right) \circ u_i^{-1}(t)\right]-\left[f(t)+\beta_i\left(f_b^\beta-b\right) \circ u_i^{-1}(t)\right] \\
&=\left[\alpha_i f_b^\alpha\left(u_i^{-1}(t)\right)-\beta_i f_b^\beta\left(u_i^{-1}(t)\right)\right]+\left(\beta_i-\alpha_i\right) b \circ u_i^{-1}(t) \\
&=\left[\alpha_i f_b^\alpha-\beta_i f_b^\alpha+\beta_i f_b^\alpha-\beta_i f_b^\beta\right] \circ u_i^{-1}(t)+\left(\beta_i-\alpha_i\right) b \circ u_i^{-1}(t) \\
&=\left[\left(\alpha_i-\beta_i\right) f_b^\alpha+\beta_i\left(f_b^\alpha-f_b^\beta\right)\right] \circ u_i^{-1}(t)+\left(\beta_i-\alpha_i\right) b \circ u_i^{-1}(t).
\end{aligned}
\end{equation}
Now, using triangle inequality and definition of uniform norm, we have
\begin{equation}
    \begin{aligned}
\left|f_b^\alpha(t)-f_b^\beta(t)\right| & \leq\left|\left[\left(\alpha_i-\beta_i\right) f_b^\alpha+\beta_i\left(f_b^\alpha-f_b^\beta\right)\right] \circ u_i^{-1}(t)\right|+\left|\left(\beta_i-\alpha_i\right) b \circ u_i^{-1}(t)\right| \\
& \leq|\alpha-\beta|_{\infty}\left\|f_b^\alpha\right\|_{\infty}+|\beta|_{\infty}\left\|f_b^\alpha-f_b^\beta\right\|_{\infty}+|\beta-\alpha|_{\infty}\|b\|_{\infty} \\
&=|\alpha-\beta|_{\infty}\left(\left\|f_b^\alpha\right\|_{\infty}+\|b\|_{\infty}\right)+|\beta|_{\infty}\left\|f_b^\alpha-f_b^\beta\right\|_{\infty} .
\end{aligned}
\end{equation}
It follows that, for all $t \in \triangle$, we get
\begin{equation}
    \left|f_b^\alpha(t)-f_b^\beta(t)\right| \leq|\alpha-\beta|_{\infty}\left(\left\|f_b^\alpha\right\|_{\infty}+\|b\|_{\infty}\right)+|\beta|_{\infty}\left\|f_b^\alpha-f_b^\beta\right\|_{\infty}.
\end{equation}
The above implies that
\begin{equation}
    \left\|f_b^\alpha-f_b^\beta\right\|_{\infty} \leq|\alpha-\beta|_{\infty}\left(\left\|f_b^\alpha\right\|_{\infty}+\|b\|_{\infty}\right)+|\beta|_{\infty}\left\|f_b^\alpha-f_b^\beta\right\|_{\infty}.
\end{equation}
Using $1-|\beta|_{\infty} \geq 1-r$, finally we have

\begin{equation}
\|W(\alpha)-W(\beta)\|_{\infty}=\|f_b^\alpha-f_b^\beta\|_{\infty} \leq \frac{|\alpha-\beta|_{\infty}}{1-r}\left(\left\|f_b^\alpha\right\|_{\infty}+\|b\|_{\infty}\right).
\end{equation}
Since $\alpha$ is fixed and $\left\|f_b^\alpha\right\|_{\infty}$ is bounded, we have $W$ is continuous at $\alpha$. Since $\alpha$ was taken arbitrarily, hence, $W$ is continuous on $S$.
\end{proof}
\begin{theorem}
Let $f \in C(\triangle, \mathbb{R})$ and scale vector $\alpha \in \mathbb{R}^3$ with $|\alpha|_{\infty}<1$ and $X_f=\left\{b \in C(\triangle, \mathbb{R}):\left.b\right|_{V_0}=\left.f\right|_{V_0}\right\}$. Then the map $T: X_f \rightarrow C(\triangle, \mathbb{R})$ defined by $T(b)=f_b^\alpha$ is Lipschitz continuous.
\end{theorem}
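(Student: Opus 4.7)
The plan is to mimic the strategy of the previous theorem, but now vary $b$ instead of $\alpha$. For $b_1,b_2\in X_f$, I would write down the functional equation satisfied by each fractal function: for $t\in u_i(\triangle)$ and $i\in\{1,2,3\}$,
\begin{equation*}
f_{b_1}^\alpha(t) = f(t)+\alpha_i\bigl(f_{b_1}^\alpha-b_1\bigr)\circ u_i^{-1}(t),
\qquad
f_{b_2}^\alpha(t) = f(t)+\alpha_i\bigl(f_{b_2}^\alpha-b_2\bigr)\circ u_i^{-1}(t).
\end{equation*}
Subtracting, the $f(t)$ terms cancel and I am left with
\begin{equation*}
f_{b_1}^\alpha(t)-f_{b_2}^\alpha(t)=\alpha_i\bigl(f_{b_1}^\alpha-f_{b_2}^\alpha\bigr)\circ u_i^{-1}(t) - \alpha_i\bigl(b_1-b_2\bigr)\circ u_i^{-1}(t).
\end{equation*}

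Next I would take absolute values, apply the triangle inequality, and pass to the supremum norm. Since $u_i^{-1}$ sends $u_i(\triangle)$ into $\triangle$, both terms on the right are bounded by uniform norms of the corresponding functions on $\triangle$, giving
\begin{equation*}
\bigl|f_{b_1}^\alpha(t)-f_{b_2}^\alpha(t)\bigr|\le |\alpha|_\infty\bigl\|f_{b_1}^\alpha-f_{b_2}^\alpha\bigr\|_\infty + |\alpha|_\infty\|b_1-b_2\|_\infty.
\end{equation*}
Because the $u_i(\triangle)$ cover $\triangle$, this bound holds for every $t\in\triangle$, so taking the supremum on the left yields the self-referential inequality
\begin{equation*}
\bigl\|f_{b_1}^\alpha-f_{b_2}^\alpha\bigr\|_\infty\le |\alpha|_\infty\bigl\|f_{b_1}^\alpha-f_{b_2}^\alpha\bigr\|_\infty + |\alpha|_\infty\|b_1-b_2\|_\infty.
\end{equation*}

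Finally, using $|\alpha|_\infty<1$, I would absorb the first term on the right into the left and solve, obtaining
\begin{equation*}
\|T(b_1)-T(b_2)\|_\infty=\bigl\|f_{b_1}^\alpha-f_{b_2}^\alpha\bigr\|_\infty\le \frac{|\alpha|_\infty}{1-|\alpha|_\infty}\,\|b_1-b_2\|_\infty,
\end{equation*}
which is the required Lipschitz estimate with constant $L=|\alpha|_\infty/(1-|\alpha|_\infty)$. There is no real obstacle here; the only care needed is to note that $T$ is well defined (each $b\in X_f$ does satisfy the compatibility condition $b|_{V_0}=f|_{V_0}$ required for the attractor theorem to produce an interpolant) and that the self-referential inequality is legitimate because $\|f_{b_1}^\alpha-f_{b_2}^\alpha\|_\infty$ is finite, both $f_{b_j}^\alpha$ being continuous on the compact set $\triangle$.
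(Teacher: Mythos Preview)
Your proof is correct and follows essentially the same approach as the paper: write the two functional equations, subtract, bound by the uniform norm, and solve the resulting self-referential inequality to obtain the Lipschitz constant $|\alpha|_\infty/(1-|\alpha|_\infty)$. Your added remarks on well-definedness of $T$ and finiteness of $\|f_{b_1}^\alpha-f_{b_2}^\alpha\|_\infty$ are a nice touch that the paper leaves implicit.
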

\begin{proof}
 We know that for a scale vector $\alpha$ and a suitable function $b: \triangle \rightarrow \mathbb{R}$, the function $f_b^\alpha$ is unique. Further, being fixed point of $\mathrm{RB}$ - operator, $f_b^\alpha$ satisfies the functional equation:
$$
f_b^\alpha(t)=f(t)+\alpha_i\left(f_b^\alpha-b\right) \circ u_i^{-1}(t),~ \forall~ t \in u_i(\triangle), i \in\{1,2,3\} .
$$
It is obvious that $T$ is well defined. Let $b, c \in X_f$ then from the above functional equation, we have
$$
T(b)(t)=f_b^\alpha(t)=f(t)+\alpha_i\left(f_b^\alpha-b\right) \circ u_i^{-1}(t), ~\forall~ t \in u_i(\triangle), i \in\{1,2,3\}.
$$
and
$$
T(c)(t)=f_c^\alpha(t)=f(t)+\alpha_i\left(f_c^\alpha-c\right) \circ u_i^{-1}(t),~ \forall~ t \in u_i(\triangle), i \in\{1,2,3\}.
$$
On subtracting one from other of the above two equations, we get for $t \in u_i(\triangle)$
$$\begin{aligned}
f_b^\alpha(t)-f_c^\alpha(t) &=\left[f(t)+\alpha_i\left(f_b^\alpha-b\right) \circ u_i^{-1}(t)\right]-\left[f(t)+\alpha_i\left(f_c^\alpha-c\right) \circ u_i^{-1}(t)\right] \\
&=\alpha_i\left(f_b^\alpha-f_c^\alpha\right) \circ u_i^{-1}(t)+\alpha_i(c-b) \circ u_i^{-1}(t).
\end{aligned}$$
Now using triangle inequality and definition of uniform norm, we have
$$
\begin{aligned}
\left|f_b^\alpha(t)-f_c^\alpha(t)\right| &=\left|\alpha_i\left(f_b^\alpha-f_c^\alpha\right) \circ u_i^{-1}(t)+\alpha_i(c-b) \circ u_i^{-1}(t)\right| \\
& \leq\left|\alpha_i\left(f_b^\alpha-f_c^\alpha\right) \circ u_i^{-1}(t)\right|+\left|\alpha_i(c-b) \circ u_i^{-1}(t)\right| \\
& \leq|\alpha|_{\infty}\left\|f_b^\alpha-f_c^\alpha\right\|_{\infty}+|\alpha|_{\infty}\|c-b\|_{\infty}.
\end{aligned}
$$
The above inquality holds for all $t \in \triangle$, therefore, we write
$$
\left\|f_b^\alpha-f_c^\alpha\right\|_{\infty} \leq|\alpha|_{\infty}\left\|f_b^\alpha-f_c^\alpha\right\|_{\infty}+|\alpha|_{\infty}\|c-b\|_{\infty}
.$$
This can be recasted as, $\left\|f_b^\alpha-f_c^\alpha\right\|_{\infty} \leq \frac{|\alpha|_{\infty}}{1-|\alpha|_{\infty}}\|b-c\|_{\infty}$. It follows that 
$$
\| T(b)- T(c)\|_{\infty} \leq \frac{|\alpha|_{\infty}}{1-|\alpha|_{\infty}}\| b-c \|_{\infty},
$$
which shows that $T$ is a Lipschitz continuous map with Lipschitz constant $\frac{|\alpha|_{\infty}}{1-|\alpha|_{\infty}}$.
\end{proof}

Next, we plot the $graph(f_{b}^\alpha)$ for different values of parameters, that is, original function, base function and scale vector. One can easily identify the variation in these graphs by changing the values of parameters. Hence, $graph(f_{b}^\alpha)$ depends on these parameters. 
\begin{figure}[h!]
\begin{minipage}{0.5\textwidth}
                       \includegraphics[width=1.0\linewidth]{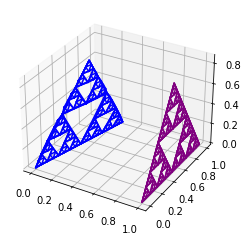}
                       {\hspace*{1cm}$graph(f_{b}^\alpha)$ at $\alpha = 0.1$}
                       \end{minipage}\hspace*{0.3cm}
                       \begin{minipage}{0.5\textwidth}
                \includegraphics[width=1.0\linewidth]{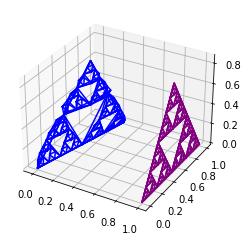}
                       {\hspace*{3cm}$graph(f_{b}^\alpha)$ at $\alpha = 0.3$}
                       \end{minipage}
                       \begin{minipage}{0.5\textwidth}
                       \includegraphics[width=1.0\linewidth]{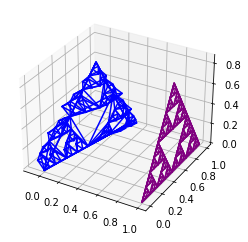}
                       {\hspace*{3cm}$graph(f_{b}^\alpha)$ at $\alpha = 0.6$}
                       \end{minipage}\hspace*{0.3cm}
                       \begin{minipage}{0.5\textwidth}
                       \includegraphics[width=1.0\linewidth]{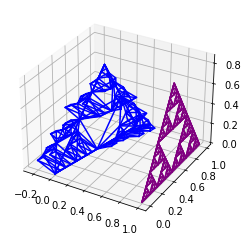}
                       {\hspace*{3cm}$graph(f_{b}^\alpha)$ at $\alpha = 0.9$}
                       \end{minipage}
                       \caption{$graph(f_{b}^\alpha)$ for the various values of the  $\alpha$, where $b(x,y)= \frac{x}{4}+\frac{y}{9}-1.3y(x-0.5)$,     $f(x,y)=\frac{x}{4}+\frac{y}{9}$.}
                       \end{figure}

\begin{figure}[h!]
\begin{minipage}{0.5\textwidth}
                       \includegraphics[width=1.0\linewidth]{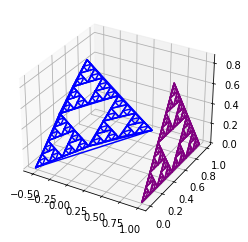}
                       {\hspace*{1cm}$graph(f_{b}^\alpha)$ at $\alpha = 0.1$}
                       \end{minipage}\hspace*{0.3cm}
                       \begin{minipage}{0.5\textwidth}
                \includegraphics[width=1.0\linewidth]{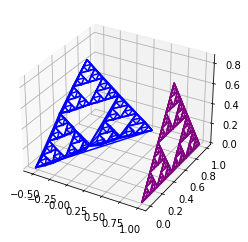}
                       {\hspace*{3cm}$graph(f_{b}^\alpha)$ at $\alpha = 0.3$}
                       \end{minipage}
                       \begin{minipage}{0.5\textwidth}
                       \includegraphics[width=1.0\linewidth]{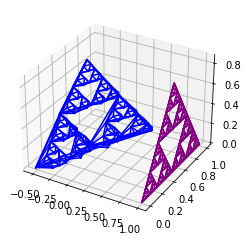}
                       {\hspace*{3cm}$graph(f_{b}^\alpha)$ at $\alpha = 0.6$}
                       \end{minipage}\hspace*{0.3cm}
                       \begin{minipage}{0.5\textwidth}
                       \includegraphics[width=1.0\linewidth]{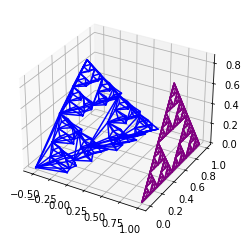}
                       {\hspace*{3cm}$graph(f_{b}^\alpha)$ at $\alpha = 0.9$}
                       \end{minipage}
                       \caption{$graph(f_{b}^\alpha)$ for the various values of the  $\alpha$, where $b(x,y)= sin(x+ 3.7)+1.3x-x^{2}y+0.866x^{2}+xy-0.866x$,     $f(x,y)= sin(x+ 3.7)+1.3x$.}
                       \end{figure}

\begin{figure}[h!]
\begin{minipage}{0.5\textwidth}
                       \includegraphics[width=1.0\linewidth]{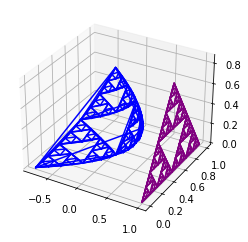}
                       {\hspace*{1cm}$graph(f_{b}^\alpha)$ at $\alpha = 0.1$}
                       \end{minipage}\hspace*{0.3cm}
                       \begin{minipage}{0.5\textwidth}
                \includegraphics[width=1.0\linewidth]{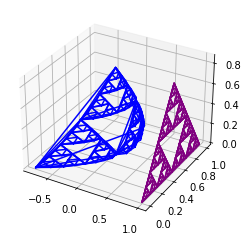}
                       {\hspace*{3cm}$graph(f_{b}^\alpha)$ at $\alpha = 0.3$}
                       \end{minipage}
                       \begin{minipage}{0.5\textwidth}
                       \includegraphics[width=1.0\linewidth]{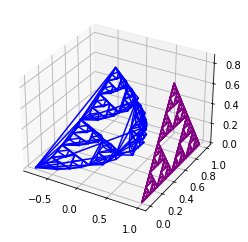}
                       {\hspace*{3cm}$graph(f_{b}^\alpha)$ at $\alpha = 0.6$}
                       \end{minipage}\hspace*{0.3cm}
                       \begin{minipage}{0.5\textwidth}
                       \includegraphics[width=1.0\linewidth]{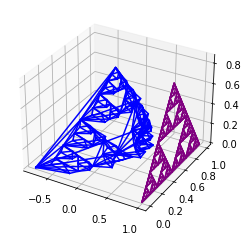}
                       {\hspace*{3cm}$graph(f_{b}^\alpha)$ at $\alpha = 0.9$}
                       \end{minipage}
                       \caption{$graph(f_{b}^\alpha)$ for the various values of the  $\alpha$, where $b(x,y)= cos(2x +5)+ sin(x + 2.7)-1.5 + 1.3x -x^{2}y+0.866x^{2}+xy-0.866x$,     $f(x,y)= cos(2x+5)+ sin(x + 2.7)-1.5 + 1.3x$.}
                       \end{figure} 
\clearpage
\begin{figure}[h!]
\begin{minipage}{0.5\textwidth}
                       \includegraphics[width=1.0\linewidth]{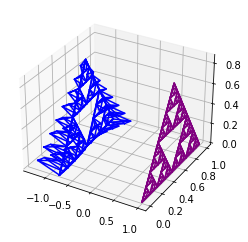}
                       {\hspace*{1cm}$graph(f_{b}^\alpha)$ at $\alpha = 0.1$}
                       \end{minipage}\hspace*{0.3cm}
                       \begin{minipage}{0.5\textwidth}
                \includegraphics[width=1.0\linewidth]{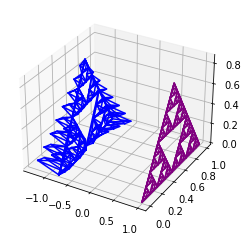}
                       {\hspace*{3cm}$graph(f_{b}^\alpha)$ at $\alpha = 0.3$}
                       \end{minipage}
                       \begin{minipage}{0.5\textwidth}
                       \includegraphics[width=1.0\linewidth]{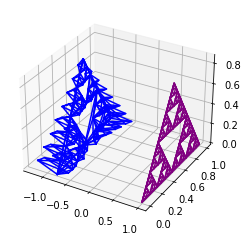}
                       {\hspace*{3cm}$graph(f_{b}^\alpha)$ at $\alpha = 0.6$}
                       \end{minipage}\hspace*{0.3cm}
                       \begin{minipage}{0.5\textwidth}
                       \includegraphics[width=1.0\linewidth]{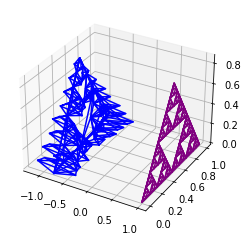}
                       {\hspace*{3cm}$graph(f_{b}^\alpha)$ at $\alpha = 0.9$}
                       \end{minipage}
                       \caption{$graph(f_{b}^\alpha)$ for the various values of the  $\alpha$, where $b(x,y)= cos(100x +5)+ sin(x + 2.7)-1.5 + 1.3x -x^{2}y+0.866x^{2}+xy-0.866x$,     $f(x,y)= cos(100x+5)+ sin(x + 2.7)-1.5 + 1.3x$.}
                       \end{figure}

 \section{Declaration}

\textbf{Funding.}  Not applicable. \\

\textbf{Conflicts of interest.} We do not have any conflict of interest.\\

\textbf{Availability of data and material.} Not applicable.\\

\textbf{Code availability.} Not applicable.\\

\textbf{Authors' contributions.} Each author contributed equally in this manuscript.

\bibliographystyle{amsplain}

\end{document}